\theoremstyle{definition}
\newtheorem{con}{Conjecture}
\newtheorem{thm}{Theorem}[section]
\newtheorem{lemma}[thm]{Lemma}
\newtheorem{cor}[thm]{Corollary}
\newtheorem{example}{Example}[section]
\theoremstyle{remark}
\newtheorem{remark}[thm]{Remark}
\title{Chirally Cosmetic Surgeries on Kinoshita-Terasaka and Conway knot families}
\author{Xiliu Yang}
\address{LMAM, School of Mathematics Sciences, Peking University, Beijing, P.R. China}
\email{1801110020@pku.edu.cn}
\date{\today}
\begin{document}
\maketitle

\begin{abstract}
In this note, we prove that a nontrivial Kinoshita-Terasaka or Conway knot does not admit chirally cosmetic surgeries, by calculating the finite type invariant of order 3.
\end{abstract}

\section{Introduction}

Let $ K $ be a knot in $ S^3 $ and $ r $ be a number in $ \mathbb{Q} \cup \{ \infty \} $, we denote by $ S^3_r(K) $ the manifold obtained by the Dehn surgery along $ K $ with slope $ r $. Two surgeries along $ K $ with distinct slopes $ r $ and $ s $ are called \emph{purely cosmetic} if $ S^3_r(K) \cong S^3_s(K) $, and called \emph{chirally cosmetic} if $ S^3_r(K) \cong - S^3_s(K) $. Here $ M \cong N $ means that $ M $ and $ N $ are homeomorphic as oriented manifolds, and $ -M $ represents the manifold $ M $ with opposite orientation.

The (purely) Cosmetic Surgery Conjecture \cite[Problem 1.81(A)]{Kir95}, \cite[Conjecture 6.1]{Gor90} asserts that if a knot $ K $ is nontrivial, then it does not admit purely cosmetic surgeries. This conjecture has been studied in many cases using different obstructions. For instance, if $ K $ admits purely cosmetic surgeries, then the surgery slope cannot be $ \infty $ \cite{GL89}, the normalized Alexander polynomial $ \Delta_K(t) $ of $ K $ satisfies $ \Delta_K''(1) = 0 $ \cite{BL90}, the Jones polynomial $ V_K(t) $ satisfies $ V''_K(1) = V'''_K (1) = 0 $ \cite{IW19}, and the finite type invariants satisfies $ v_2(K) = v_3(K) = 0 $ \cite{IW19}. Some other constraints are given, for example, by using the LMO invariants \cite{Ito17}, the quantum $ SO(3) $-invariant \cite{Det21}. Besides the above criteria, Heegaard Floer homology, as well as its immersed curve version, has been particularly effective for this conjecture. Combining the work of Ozsv\'ath and Szab\'o \cite{OS11}, Ni and Wu \cite{NW15}, and Hanselman \cite{Han18}, we know that if two distinct slopes $ r $ and $ s $ are purely cosmetic, then $ r = -s $, and the set $ \{r,s\} $ can only be $ \{ \pm 2 \} $ or $ \{ \pm 1/p \} $ for some integer $ p $. This conjecture has been verified for many knots, including, Seifert genus one knots \cite{Wang06}, cable knots \cite{Tao18}, composite knots \cite{Tao19},  2-bridge knots \cite{IJMS19}, 3-braid knots \cite{Var20}, pretzel knots \cite{SS20}, and knots with at most 17 crossings \cite{Han18, Det21}. Indeed, the purely cosmetic surgeries are really rare. Specifically, given $ b > 0 $, there are only finitely many knots with braid index $ b $ that possibly admit purely cosmetic surgeries \cite{Ito21}.

On the other hand, the chirally case is rather complicated since there are two known families of chirally cosmetic surgeries for knots in $ S^3 $:
\begin{enumerate}
	\item[(A).] For an amphicheiral knot $ K $ and a slope $ r $, we have $ S^3_r(K) \cong -S^3_{-r}(K) $.
	\item[(B).] For a $ (2, k) $-torus knot $ K $, we have $ S^3_r(K) \cong -S^3_s(K) $, where $ \{r, s \} = \{\frac{2k^2(2m + 1)}{k(2m+1) + 1}, \frac{2k^2(2m + 1)}{k(2m+1) - 1}\} $ for some integer $ m $ \cite{Mat92}.
\end{enumerate}
With the exception of the above two cases, no knot was found to have chirally cosmetic surgeries. The conjecture for chirally case states as follows:
\begin{con}{\cite[Conjecture 1]{IIS21}}\label{CCSC1}
	Suppose $ K $ is not amphichiral and is not a $ (2, k) $-torus knot, then $ K $ does not admit chirally cosmetic surgeries.
\end{con}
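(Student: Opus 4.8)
The plan is to argue by contradiction: assume that a knot $K$ which is neither amphichiral nor a $(2,k)$-torus knot admits chirally cosmetic surgeries, so that $S^3_r(K) \cong -S^3_s(K)$ for some distinct slopes $r \ne s$, and then extract enough numerical constraints from orientation-sensitive invariants to force $K$ into one of the two excluded families. The first reductions are standard: the slope $\infty$ is excluded since $S^3_\infty(K) = S^3$ cannot be a chirally cosmetic partner of any nontrivial surgery, and comparing $|H_1|$ shows that if $r = p/q$ and $s = p'/q'$ in lowest terms then $|p| = |p'|$. The orientation reversal built into the homeomorphism is what I would exploit throughout, since the decisive invariants all change sign (or transform in a controlled way) under $M \mapsto -M$.

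The central tool is the Casson--Walker invariant $\lambda$, which satisfies $\lambda(-M) = -\lambda(M)$ and has a surgery formula of the shape $\lambda(S^3_{p/q}(K)) = \tfrac{q}{2p}\Delta_K''(1) + \lambda(L(p,q))$, the lens-space term being a Dedekind-sum expression in $(p,q)$ alone. Imposing $\lambda(S^3_r(K)) = -\lambda(S^3_s(K))$ yields a single rational equation relating $r$, $s$ and $\Delta_K''(1)$; together with $|p|=|p'|$ this should cut the admissible slope pairs down to a short list, in the same spirit as the purely cosmetic reduction to $\{\pm 2\}$ and $\{\pm 1/p\}$ recalled in the introduction. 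When $\Delta_K''(1) \ne 0$ this equation is already very restrictive and, reinforced by the $SU(2)$ Casson invariant at integral slopes, is expected to eliminate most knots outright.

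The subtle regime is $\Delta_K''(1) = 0$ (equivalently $v_2(K) = 0$), where the Casson--Walker obstruction degenerates; this is precisely the regime of the Kinoshita--Terasaka and Conway knots, whose Alexander polynomial is trivial. Here I would invoke the next finite type invariant: the order-$3$ invariant $v_3$ together with the degree-$3$ part of the LMO expansion of the surgered manifold, whose orientation behaviour again supplies a sign-sensitive identity. Writing the surgery formula for this degree-$3$ term in terms of $v_2(K)$ and $v_3(K)$ and imposing the chiral identity should produce a polynomial relation in $r$, $s$ and $v_3(K)$; when $v_2(K)=0$ this collapses to a condition that cannot hold unless $v_3(K) = 0$ or the slopes take the very special torus-knot form of family (B). Thus a knot with $v_2(K) = 0$ and $v_3(K) \ne 0$ admits no chirally cosmetic surgeries, which is exactly the mechanism the present paper realizes by computing $v_3$ for the two families.

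The main obstacle, and the reason the statement as worded resists these methods alone, is the existence of knots for which all low-order obstructions vanish simultaneously: there are non-amphichiral, non-torus knots with $v_2(K) = v_3(K) = 0$ (and vanishing Jones derivatives) for which neither the Casson--Walker equation nor the order-$3$ equation carries any information. For such knots one is forced to bring in higher-order finite type invariants, the Heegaard Floer correction terms $d$ and their immersed-curve refinements, or---in the hyperbolic case---the equality of hyperbolic volumes together with $CS(-M) = -CS(M)$ for the Chern--Simons invariant, none of which is presently known to close the gap uniformly. I therefore do not expect a single clean proof of the full conjecture; the realistic strategy is to settle it family-by-family, and the order-$3$ computation carried out below is exactly such a step, disposing of the two families where the Alexander polynomial is trivial but $v_3$ is not.
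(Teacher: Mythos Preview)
The statement you are attempting to prove is a \emph{conjecture}, and the paper does not prove it; it is stated as Conjecture~\ref{CCSC1}, cited from \cite{IIS21}, and left open. The paper's actual contribution is Theorem~\ref{main_thm}, which verifies this conjecture only for the Kinoshita--Terasaka and Conway families, via the explicit computation of $v_3$ in Lemma~\ref{v_3} and the obstruction $O(K)\le 2$ from Theorem~\ref{key}. There is therefore no ``paper's own proof'' to compare your proposal against.

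Your write-up, to its credit, essentially concedes this: you correctly identify the degenerate regime $v_2(K)=v_3(K)=0$ (non-amphichiral, non-torus knots with vanishing low-order invariants) as the obstacle, and you explicitly say you ``do not expect a single clean proof of the full conjecture.'' That is an honest assessment, but it means what you have written is not a proof of the stated conjecture; it is a survey of available obstructions together with an acknowledgment that they do not suffice. The claim that the order-$3$ LMO identity ``collapses to a condition that cannot hold unless $v_3(K)=0$ or the slopes take the very special torus-knot form of family (B)'' is also stronger than what is actually established in \cite{IIS21}: the result there is the inequality $O(K)\le 2$ of Theorem~\ref{key}, which uses $a_2$, $a_4$, and $v_3$ together and does not reduce to the clean dichotomy you describe when $a_2=a_4=0$. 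In short, your outline is a reasonable strategic overview, but it is not a proof, and the paper does not claim one either.
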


The conjecture has been verified for alternating genus one knots \cite{IIS19}, alternating odd pretzel knots \cite{Var20,Var21}, a certain family of the positive Whitehead doubles \cite{Var21}, and cable knots with some additional assumptions \cite{Ito21c}. In this short note, we prove the following:
\begin{thm}\label{main_thm}
	Any nontrivial Kinoshita-Terasaka and Conway knot does not admit chirally cosmetic surgeries.
\end{thm}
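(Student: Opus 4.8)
The plan is to reduce Theorem~\ref{main_thm} to a single computation of the order-$3$ finite type invariant $v_3$. Three inputs make this possible. First, the obstruction to chirally cosmetic surgery from \cite{IIS21}: if $S^3_r(K)\cong -S^3_s(K)$ with $r\ne s$, then (the slope $\infty$ being excluded by \cite{GL89}, and the linking form forcing $|H_1|$ to agree, so both slopes have the form $p/q$ and $p/q'$ with the same $p$) the Casson--Walker invariant, being odd under orientation reversal, gives
\[
\tfrac{q+q'}{p}\,a_2(K)+\lambda(L(p,q))+\lambda(L(p,q'))=0,
\]
where $a_2(K)=\tfrac12\Delta_K''(1)=-\tfrac16 V_K''(1)$, and the accompanying order-$3$ obstruction supplies a second relation among $p,q,q'$ whose only knot-theoretic inputs are $a_2(K)$ and $v_3(K)$. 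Second, every member of the Kinoshita--Terasaka and Conway families has trivial Alexander polynomial --- the classical Kinoshita--Terasaka computation, carried to the Conway family by mutation invariance of $\Delta$ --- so $a_2(K)=0$, the obstructions of order $\le 2$ are vacuous, and the \cite{IIS21} obstruction collapses to the condition $v_3(K)=0$. (Neither the amphichiral family (A) nor the torus family (B) of the introduction can interfere: a knot with $\Delta=1$ is not a $(2,k)$-torus knot, and $v_3$ changes sign under mirror image, so $v_3\ne 0$ --- which we are about to establish --- precludes amphichirality.) Third, $v_3$ is a mutation invariant, being read off from the Jones (or HOMFLY) polynomial, so it agrees on corresponding members of the two families. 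It therefore suffices to prove that $v_3(K)\ne 0$ for every nontrivial Kinoshita--Terasaka knot $K$.

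The heart of the matter is then the evaluation of $v_3$ on the two-parameter Kinoshita--Terasaka family $KT(\ell,m)$. I would take the standard diagrams, in which $KT(\ell,m)$ is obtained from a fixed tangle by inserting $\ell$ and $m$ (half-)twists in two bands, and use that $v_3$, having Vassiliev order $3$, restricts to such a twist family as a polynomial in $\ell$ and $m$ of small degree. Either the skein/crossing-change recursion obeyed by any order-$3$ invariant, applied across each twist region, or --- more directly --- the Polyak--Viro arrow-diagram formula for $v_3$ applied to the Gauss diagrams of the family, produces an explicit closed form $v_3\bigl(KT(\ell,m)\bigr)=P(\ell,m)$ for a concrete polynomial $P$. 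One then pins down exactly which pairs $(\ell,m)$ give the unknot --- this is precisely the content of the hypothesis ``nontrivial'', since the construction is engineered to output Alexander-polynomial-one knots, some of which happen to be unknotted --- and verifies that $P(\ell,m)\ne 0$ away from that locus. Given the reduction of the surgery obstruction to $v_3=0$, this finishes the proof for both families at once.

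The main obstacle is this last computation together with the non-vanishing check: handling an honestly two-parameter family, getting the fixed tangle's contribution and the base cases right, and normalizing $v_3$ consistently with the normalization in which the \cite{IIS21} obstruction is phrased. One must also confirm that the zero set of $P$ does not secretly coincide with, or lie inside, the parameter set producing the unknot. A more routine point is to make the reduction in the first paragraph airtight --- that with $a_2=0$ the \cite{IIS21} obstruction genuinely forces $v_3=0$ rather than merely constraining the slopes --- which is where one uses that these knots are not $(2,k)$-torus knots and, a posteriori, not amphichiral.
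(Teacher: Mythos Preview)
Your plan is correct and follows the same overall strategy as the paper: use that the Alexander (hence Conway) polynomial is trivial so that $a_2=a_4=0$, invoke the \cite{IIS21} obstruction to reduce everything to $v_3\ne 0$, compute $v_3$ across the family, and pass between the two families by mutation invariance of the Jones polynomial. The paper packages the first step more cleanly than your sketch by quoting Theorem~\ref{key} directly: with $a_2=a_4=0$ and $v_3\ne 0$ one has $O(K)=0\le 2$, and that is already the end of the argument. Your separate discussion of the Casson--Walker relation, amphichirality, and $(2,k)$-torus knots is unnecessary once that theorem is on the table (and note that the numerator of $O(K)$ also involves $a_4$, which you do not mention, though it vanishes here for the same reason $a_2$ does).

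The one substantive difference is in how $v_3$ is computed. Rather than Polyak--Viro Gauss-diagram formulas or a generic two-parameter Vassiliev recursion, the paper uses Lescop's crossing-change formula for $w_3=-2v_3$ applied at a single crossing in the $2n$-twist region. This produces the skein triple $(C_{r,n},\,C_{r,n-1},\,K'\cup K'')$, where $K'\cup K''$ is the underlying pretzel link; its components are a $(2,2k{+}1)$-torus knot and its mirror, $k=\lfloor r/2\rfloor$, with linking number $0$. Since $a_2(C_{r,\bullet})=0$, the formula collapses to
\[
w_3(C_{r,n})-w_3(C_{r,n-1})=a_2(T_{2,2k+1})=\tfrac{k(k+1)}{2},
\]
giving $v_3(C_{r,n})=-\tfrac{nk(k+1)}{4}$, linear in $n$ with base case the unknot at $n=0$. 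The zero locus $\{k\in\{0,-1\}\}\cup\{n=0\}$ is exactly the locus of trivial knots, so the non-vanishing check you flag as the main obstacle is immediate. This single crossing-change trick sidesteps the two-variable bookkeeping and normalization issues you anticipate with the Polyak--Viro approach.
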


The purely cosmetic surgeries on knots of both two families are ruled out in \cite{BGLX20}, and we provide an alternative proof. Our proof is based on the calculation of the constraint $ O(K)$ defined in \cite{IIS21}. Let $ a_{2i}(K) $ be the coefficient of the $ z^{2i} $-term of the Conway polynomial $ \nabla_K(z) $, and $ v_3(K) $ be the finite type invariant of order 3, $ O(K) $ is defined as:
$$ O(K) \triangleq \left\{ \begin{aligned}
& \left|\frac{7a_2(K)^2 - a_2(K) - 10a_4(K)}{4v_3(K)}\right|, & v_3(K) \neq 0;\\
& \infty, &\text{otherwise}.
\end{aligned} \right. $$

We appeals the following obstruction theorem:

\begin{thm}{\cite[Theorem 1.10]{IIS21}}\label{key}
	A knot $ K $ has no chirally cosmetic surgeries if $ O(K) \leq 2 $.
\end{thm}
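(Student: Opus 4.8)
The plan is to reconstruct the Ichihara--Ito--Saito obstruction by playing two finite-type invariants of rational homology spheres against each other, exploiting their opposite behaviour under orientation reversal. Suppose for contradiction that the conclusion fails, so $O(K)\le 2$ and yet $S^3_r(K)\cong -S^3_s(K)$ for two distinct slopes. Since $O(K)\le 2<\infty$, the definition of $O(K)$ forces $v_3(K)\neq 0$. As the order of $H_1$ of a $p/q$-surgery is $|p|$, the two surgery numerators agree up to sign; after fixing orientations I write $r=p/q$ and $s=p/q'$ with $q\neq q'$, and both slopes are finite, since an $\infty$-slope would force $S^3$ on the other side and hence the unknot.

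\textbf{Step 1 (the odd equation).} Apply Walker's surgery formula, which in the relevant normalization reads $\lambda_W\bigl(S^3_{p/q}(K)\bigr)=\tfrac{q}{p}\,a_2(K)+\lambda_W(L(p,q))$, where the lens-space term is a Dedekind sum. Combining this with $\lambda_W(-M)=-\lambda_W(M)$ and the chirally cosmetic hypothesis gives
\[
\frac{q+q'}{p}\,a_2(K)=-\lambda_W(L(p,q))-\lambda_W(L(p,q')).
\]
This first-order constraint relates the slope data to $a_2(K)$ through Dedekind sums and already rigidifies the admissible pairs $(q,q')$.

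\textbf{Step 2 (the even equation).} Introduce the degree-two finite-type invariant $\lambda_2$ of $\mathbb{Q}$-homology spheres (equivalently the appropriate coefficient of the LMO/Ohtsuki series). Being of even degree it satisfies $\lambda_2(-M)=\lambda_2(M)$, so the hypothesis yields $\lambda_2\bigl(S^3_r(K)\bigr)=\lambda_2\bigl(S^3_s(K)\bigr)$. Its surgery formula expresses $\lambda_2\bigl(S^3_{p/q}(K)\bigr)$ as a rational function of $q/p$ whose knot-dependent part is built from $a_2(K)$, $a_4(K)$ and $v_3(K)$, together with a purely arithmetic lens-space term.

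\textbf{Step 3 (elimination).} Substituting the rigidified slope family of Step 1 into the even equation and cancelling the arithmetic lens-space contributions by Dedekind reciprocity, the difference $\lambda_2(S^3_r(K))-\lambda_2(S^3_s(K))=0$ collapses to a single identity of the shape
\[
\frac{7a_2(K)^2-a_2(K)-10a_4(K)}{4v_3(K)}=f(p,q,q'),
\]
whose left side is exactly $\pm O(K)$ and whose right side $f$ is an explicit rational function of the (now heavily constrained) slope data. One then checks that $|f(p,q,q')|>2$ for every pair of distinct admissible slopes, so any chirally cosmetic surgery forces $O(K)>2$; contrapositively $O(K)\le 2$ rules them out.

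The crux is Step 3, and two things must be pinned down exactly. First, the surgery formula for $\lambda_2$ must be taken in a normalization for which the numerator condenses precisely to $7a_2^2-a_2-10a_4$; this is where the particular linear combination defining $O(K)$ is forced, and it demands careful tracking of how the order-three invariant $v_3(K)$ enters the degree-two surgery formula. Second, the sharp estimate $|f|>2$ is a finite Diophantine analysis over the integrality constraints on $q,q'$ inherited from Step 1. The threshold $2$ is genuinely sharp: the torus-knot family (B) realizes chirally cosmetic surgeries whose slope data drive $|f|$ down toward $2$, while the amphichiral family (A) escapes the bound through $v_3(K)=0$, i.e.\ $O(K)=\infty$, so the constant cannot be lowered.
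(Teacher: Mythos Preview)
The paper under review does not prove this theorem; it is quoted from \cite{IIS21} and invoked as a black box in the proof of Theorem~\ref{main_thm}. There is therefore no proof in the paper to compare your proposal against.

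Assessed on its own merits as a sketch of the \cite{IIS21} argument, your outline is in the right spirit: one does combine the Casson--Walker invariant (odd under orientation reversal) with the degree-two part of the LMO invariant (even under orientation reversal), and the combination $7a_2^2-a_2-10a_4$ does arise from the surgery formula for the latter. However, your Step~3 is where the entire substance of the theorem lives, and you have only asserted its conclusions. You have not written down the surgery formula for $\lambda_2$, shown that the elimination actually produces an identity of the form $\pm O(K)=f(p,q,q')$, or carried out the Diophantine analysis establishing $|f|>2$ on all admissible slope pairs. In \cite{IIS21} these steps constitute the bulk of the work and involve genuine case analysis; as written, your proposal is an outline rather than a proof.
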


\subsection*{Acknowledgements.} The author would like to thank Professor Jiajun Wang for helpful discussions. The author also thanks Zhujun Cao and Cheng Chang for corrections.

\section{Prove of the main result}
In \cite{KT57}, Kinoshita and Terasaka constructed a family of knots $ KT_{r,n} $ parametrized by integers $ r $ and $ n $. These knots are obtained from a diagram of the four-stranded pretzel links $ P(r+1, -r, r, -r-1) $ by introducing $ 2n $ twists, as shows in Figure \ref{KTC}. There are some redundancies in these knots. Specifically, $ KT_{r,n} $ is isotopic to the unknot if and only if $ r \in \{0, \pm 1, -2 \} $ or $ n = 0 $. By turning the knot inside out, one can observe a symmetry which identifies $ KT_{r,n} $ and $ KT_{-r-1, n} $. Finally, we note that the mirror image of $ KT_{r,n} $ is $ KT_{r,-n} $.

\begin{figure}[htb]
	\centering
	\includegraphics[height=5cm]{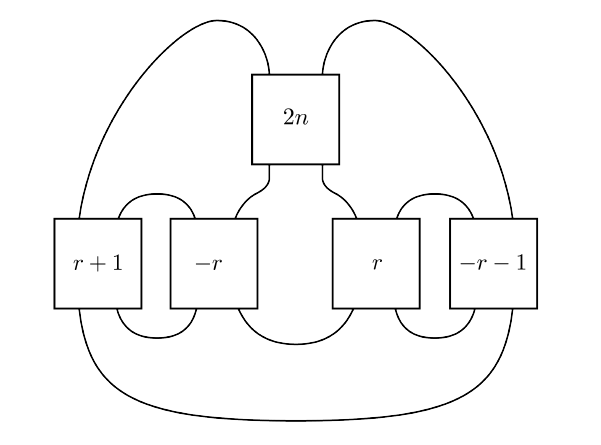}
	\quad
	\includegraphics[height=5cm]{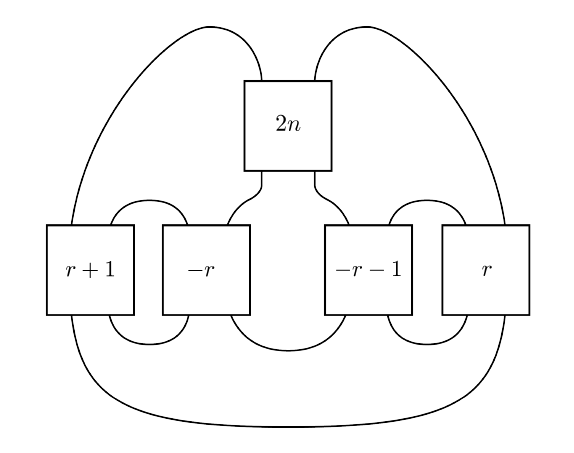}
	\caption{The diagram of Kinoshita-Terasaka knots $ KT_{r,n} $ (left) and Conway knots $ C_{r,n} $ (right). The number $ r $ in the boxes means $ r $-times half twists.}
	\label{KTC}
\end{figure}

The Conway knot $ C_{r, n} $ shows in Figure \ref{KTC}, is obtained from $ KT_{r,n} $ by a mutation. These knots also have a similar construction as $ KT_{r,n} $, only using the four-stranded pretzel links $ P(r+1,-r,-r-1, r) $  instead of $ P(r+1, -r, r, -r-1) $. Therefore, Conway and Kinoshita-Terasaka knots satisfy many of the same relations. In particular, $ C_{r, n} $ is isotopic to the unknot iff $ r \in \{0, \pm 1, -2 \} $ or $ n = 0 $, $ C_{r,n} = C_{-r-1, n} $, and $ C^*_{r,n} = C_{r, -n} $. \\

What makes these two families special is that they have trivial Alexander polynomials $ \Delta_K(t) $, as well as Conway polynomials $ \nabla_K(z) $, since $ \Delta_K(t) = \nabla_K(t^{1/2} - t^{-1/2}) $. We note that their Conway polynomials can be computed directly by skein relation at $ 2n $-twist part from those of pretzel link computed in \cite[Theorem 3.2]{KL07}. By definition, $ a_2(K) = a_4(K) = 0 $. It remains to find $ v_3(K) $.
\begin{lemma}\label{v_3}
	$ v_3(KT_{r,n}) = v_3(C_{r,n}) = -\frac{nk(k+1)}{4} $, here $ k = \lfloor\frac{r}{2}\rfloor $.
\end{lemma}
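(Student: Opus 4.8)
The plan is to reduce the Conway-knot case to the Kinoshita--Terasaka case by mutation invariance, then to show that $v_3(KT_{r,n})$ depends linearly on $n$ because the twist region is null-homologous, and finally to pin the linear coefficient down with one skein move onto the pretzel link of \cite{KL07}. For the first reduction: $C_{r,n}$ is obtained from $KT_{r,n}$ by a Conway mutation, and a finite type invariant of order $3$ is a mutation invariant --- mutant knots have the same Jones polynomial, and $v_3$ equals, up to normalization and adding a multiple of $a_2$, the order-$3$ coefficient of $V_K(e^h)$ expanded at $h=0$. Hence $v_3(C_{r,n}) = v_3(KT_{r,n})$, and it suffices to compute $v_3(KT_{r,n})$.

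Next I argue $v_3(KT_{r,n})$ is linear in $n$. The diagram of $KT_{r,n}$ is obtained from that of $KT_{r,0}$, the unknot, by inserting $2n$ half-twists in a fixed twist box; equivalently, $KT_{r,n}$ is $(-1/n)$-surgery on an unknot $C$ encircling the two strands running through the box. Because $\nabla_{KT_{r,n}} \equiv 1$ for all $n$ (in particular $a_2 \equiv 0$), those two strands must be anti-parallel --- otherwise $a_2(KT_{r,n})$ would grow quadratically in $n$ --- so $\mathrm{lk}(C, KT_{r,n}) = 0$ and a full twist contributes nothing in filtration degree $1$. By the standard polynomiality of finite type invariants along a twist family, $n \mapsto v_3(KT_{r,n})$ is then a polynomial in $n$ of degree at most $\lfloor 3/2 \rfloor = 1$; since $v_3(KT_{r,0}) = 0$, we conclude $v_3(KT_{r,n}) = n\, v_3(KT_{r,1})$. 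Equivalently, without the twist-family principle: changing one crossing in the box removes a bigon by Reidemeister~II and turns $KT_{r,n}$ into $KT_{r,n-1}$, and by the crossing-change formula for $v_3$ the increment $v_3(KT_{r,n}) - v_3(KT_{r,n-1})$ is an order-$2$ invariant of the resulting $1$-singular knot whose oriented resolution caps the box off along the anti-parallel strands and so is independent of $n$; hence the increment is constant.

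It remains to evaluate $v_3(KT_{r,1})$. A crossing change in the (two-crossing) twist box of $KT_{r,1}$ produces the unknot and has oriented resolution a two-component link $L_r$, so the crossing-change formula for $v_3$ gives $v_3(KT_{r,1}) = \Phi(L_r)$, where $\Phi$ is the fixed order-$2$ expression in the linking number and the low-degree Conway data of $L_r$ and of its two components (this is the $v_3$-analogue of $a_2(K_+) - a_2(K_-) = \mathrm{lk}(L_0)$, obtained from the HOMFLY skein relation). The link $L_r$ arises from the four-stranded pretzel link $P(r+1,-r,r,-r-1)$ by the same capping, so all the data entering $\Phi$ can be read off from \cite[Theorem 3.2]{KL07}. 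Running this computation, with the cases $r$ even and $r$ odd handled separately and the symmetry $KT_{r,n} = KT_{-r-1,n}$ used as a check --- note that $r$ and $-r-1$ give the same value of $k(k+1)$ for $k = \lfloor r/2 \rfloor$ --- yields $v_3(KT_{r,1}) = -\tfrac{k(k+1)}{4}$, hence $v_3(KT_{r,n}) = v_3(C_{r,n}) = -\tfrac{nk(k+1)}{4}$. The answer is odd in $n$, consistent with $KT_{r,-n} = KT_{r,n}^{\ast}$ and $v_3(K^{\ast}) = -v_3(K)$.

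The structural steps (mutation invariance and linearity in $n$) are essentially forced; I expect the real work --- and the place an error would hide --- to be the last step: fixing the correct normalization and sign of the order-$2$ invariant in the $v_3$ crossing-change formula, identifying the resolution link $L_r$ unambiguously in the pretzel diagram, correctly extracting its Conway polynomial from \cite{KL07}, and pushing the two parity computations through to the uniform closed form $-\tfrac{k(k+1)}{4}$ with $k = \lfloor r/2 \rfloor$.
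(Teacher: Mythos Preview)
Your plan is essentially the paper's proof: mutation invariance handles one family, a crossing change in the twist box steps $n\mapsto n-1$, the increment is constant because the Conway polynomials are trivial and the resolved link is independent of $n$, and summing from the unknot at $n=0$ gives the formula. The only substantive difference is in the step you flag as ``the real work'': the paper makes this concrete by using Lescop's explicit crossing-change formula for $w_3=-2v_3$ \cite{Les09}, in which the increment reduces to $a_2(K')+a_2(K'')$ for the two components of the resolved pretzel link (the $\mathrm{lk}^2$ and $a_2(K_\pm)$ terms vanish). Rather than extracting anything from \cite{KL07}, the paper simply identifies those components by inspection as the torus knots $T_{2,\pm(r+1)}$ (for $r$ even) or $T_{2,\pm r}$ (for $r$ odd), and then gets $a_2(T_{2,2k+1})=\tfrac{k(k+1)}{2}$ by iterating Hoste's formula $a_2(K_+)-a_2(K_-)=\mathrm{lk}$. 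Note that \cite[Theorem~3.2]{KL07} gives the Conway polynomial of the whole pretzel link, not of its individual components, so your proposed route through it does not directly supply the $a_2(K'),a_2(K'')$ that the formula actually needs; recognizing the components as torus knots is the missing observation.
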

\begin{proof}
	By the symmetry identifying, there is no loss of generality in assuming $ r \geq 0 $ and $ n \geq 0 $. We compute for Conway knot $ C_{r,n} $ first.
	
	Note that
	$$ v_3(K) = -\frac{1}{144}V_K'''(1) - \frac{1}{48} V_K''(1) =  -\frac{1}{24}j_3(K), $$
	where $ V_K(t) $ is the Jones polynomial, and $ j_n(K) $ is the coefficient of $ h^n $ in $ V_K(e^h) $ of $ K $, by putting $ t = e^h $. Here we use another knot invariant $ w_3(K) $ defined by Lescop in \cite{Les09}. The advantage is $ w_3 $ satisfies a crossing change formula
	\begin{equation}\label{w_3}
	w_3(K_+) - w_3(K_-) = \frac{a_2(K') + a_2(K'')}{2} - \frac{a_2(K_+) + a_2(K_-) + {\rm lk}^2(K', K'')}{4},
	\end{equation} 
	where $ (K_+, K_-, K' \cup K'') $ is a skein triple consisting of two knots $ K_{\pm} $ and a two-component link $  K' \cup K''$, cf. \cite[Proposition 7.2]{Les09} and \cite{IW19}. On the other hand, a formula from Hoste \cite[Theorem 1]{Hos85} states that
	\begin{equation}\label{a_2}
	{\rm lk}(K', K'') = a_2(K_+) - a_2(K_-).
	\end{equation}
	In our case, by smoothing at $ 2n $-twist part, $ K_+ $ is $ C_{r,n} $ and $ K_- $ is $ C_{r,n-1} $, both of which have trivial Conway polynomial; and $ K', K'' $ are two components of pretzel link $ P(r+1, -r, -r-1, r) $. If $ r $ is even, the two components $ K' $ and $ K'' $ are torus knots $ T_{2, r+1} $ and $ T_{2, -r-1} $, respectively; when $ r $ is odd, they are $ T_{2, r} $ and $ T_{2, -r} $. We compute $ a_2(T_{2,r}) $ when $ T_{2,r} $ is a knot, i.e., $ r = 2k+1 $ is odd. By equation (\ref{a_2}) again,
	$$ a_2(T_{2,2k+1}) - a_2(T_{2,2k-1}) = {\rm lk}(K_1, K_2), $$
	here $ K_1 $ and $ K_2 $ are two components of the torus link $ T_{2,2k} $, thus with linking number $ k $. Notice that $ T_{2,1} $ is the unknot, so that $ a_2(T_{2,1}) = 0 $. Therefore, it is easy to see
	$$ a_2(T_{2,2k+1}) = \frac{k(k+1)}{2}. $$
	Since $ \nabla_K(z) = \nabla_{K^*}(z) $ holds for knot $ K $ and its mirror $ K^* $, we can obtain
	$$ w_3(C_{r,n}) - w_3(C_{r,n-1}) = \frac{a_2(K') + a_2(K'')}{2} = a_2(K') = \left\{ \begin{aligned}
	& \frac{k(k+1)}{2}, & r = 2k; \\
	& \frac{k(k+1)}{2}, & r = 2k+1.
	\end{aligned}
	\right. $$
	Note that $ w_3(K) = \frac{1}{72}V'''_K(1) + \frac{1}{24}V''_K(1) = -2v_3(K) $, cf. \cite[Lemma 2.2]{IW19}. When $ n = 0 $, $ C_{r,0} $ is the unknot, so that $ w_3(C_{r,n-1}) = -2 v_3(\text{unknot}) = 0 $. Therefore,
	$$ v_3(C_{r,n}) = - \frac{1}{2}w_3(C_{r,n}) = - \frac{nk(k+1)}{4}. $$
	
	For the case that $ r < 0 $ or $ n < 0 $, the formula can also be verified due to the facts that  $ C_{r,n} = C_{-r-1, n} $, $ C_{r,n}^* = C_{r,-n} $ and $ v_3(K) = - v_3(K^*) $.
	
	For the knot $ KT_{r, n} $, since $ v_3(K) $ is determined by its Jones polynomial, which is invariant under mutation, $ v_3(KT_{r,n}) = v_3(C_{r,n}) $ for all $ r, n $.
\end{proof}
\begin{remark}
	One can also compute $ w_3(KT_{r,n}) $ directly in the same way. The only difference is the two components of pretzel link $ P(r+1, -r, r, -r-1) $ are a unknot and a connected sum $ T_{2, r+1} \# T_{2, -r-1} $ or $ T_{2,r} \# T_{2,-r} $, depending on the parity of $ r $. And then, $ a_2(K \# K') = a_2(K) + a_2(K') $ follows the fact that $ \nabla_{K \# K'}(z) = \nabla_{K}(z) \cdot \nabla_{K'}(z) $.
\end{remark}

\begin{example}
	The knot $ K11n34 $ in Hoste-Thistlethwaite table \cite{Atlas} is the mirror of the original Conway knot $ C_{2,1} $, that is, $ K11n34 = C_{2,-1} $. The Jones polynomial is
	$$ V(q) = -q^4+2 q^3-2 q^2+2 q+ q^{-2} -2 q^{-3} +2 q^{-4} -2 q^{-5} + q^{-6}. $$
	Direct calculation of the derivatives of $ V $ at $ q = 1 $ gives that $ V''(1) = 0 $ and $ V'''(1) = -72 $. So, we have
	$$ v_3 = -\frac{1}{48}V''(1) - \frac{1}{144}V'''(1) = \frac{1}{2}. $$
\end{example}

Note that $ \lfloor\frac{r}{2}\rfloor = 0 $ iff $ r = 0 $ or $ 1 $, and $ \lfloor\frac{r}{2}\rfloor = -1 $ iff $ r = -1 $ or $ -2 $. We have the following corollary immediately.
\begin{cor}\label{trivial}
	Let $  K $ belong to one of the families $ KT_{r,n} $ and $ C_{r,n} $. $ v_3(K) = 0 $ if and only if $ K $ is isotopic to the unknot, i.e., $ r \in \{0, \pm 1, -2 \} $ or $ n = 0 $.
\end{cor}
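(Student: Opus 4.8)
The plan is to read the statement straight off Lemma \ref{v_3}. For $K \in \{KT_{r,n}, C_{r,n}\}$ that lemma gives $v_3(K) = -\tfrac14\, n\,k(k+1)$ with $k = \lfloor r/2 \rfloor \in \mathbb{Z}$. Since this is $\tfrac14$ times a product of integers, it vanishes if and only if one of the factors is zero, i.e.\ $n = 0$ or $k(k+1) = 0$. Over $\mathbb{Z}$ the equation $k(k+1) = 0$ has exactly the solutions $k = 0$ and $k = -1$, so the first step is just this one-line factorization argument.

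Next I would convert the condition on $k$ into the advertised condition on $r$ using the floor-function observation recorded immediately before the corollary: $\lfloor r/2\rfloor = 0 \iff r \in \{0,1\}$ and $\lfloor r/2 \rfloor = -1 \iff r \in \{-1,-2\}$. Hence $k(k+1) = 0 \iff r \in \{0,1,-1,-2\} = \{0,\pm 1,-2\}$, and therefore $v_3(K) = 0 \iff \big(n = 0 \text{ or } r \in \{0,\pm 1,-2\}\big)$. Finally I would cite the description of the redundancies of these families in Section \ref{...} (for both $KT_{r,n}$ and $C_{r,n}$, the knot is isotopic to the unknot precisely when $r \in \{0,\pm1,-2\}$ or $n = 0$): this matches the condition just obtained, so $v_3(K) = 0$ if and only if $K$ is trivial, completing the proof. (The easy direction is also consistent, since $v_3(\text{unknot}) = 0$.)

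I do not expect any real obstacle here: once Lemma \ref{v_3} is available the corollary is essentially immediate. The only point worth a sentence of care is consistency with the symmetries used to extend the formula in Lemma \ref{v_3} from the normalized range $r,n \ge 0$ to all $r,n$, namely $KT_{r,n} = KT_{-r-1,n}$, $KT^*_{r,n} = KT_{r,-n}$ (and likewise for $C_{r,n}$) together with $v_3(K^*) = -v_3(K)$; one should note that the parameter set $\{0,\pm1,-2\}$ is invariant under $r \mapsto -r-1$ (it swaps $0 \leftrightarrow -1$ and $1 \leftrightarrow -2$), so the unknot locus has the same symmetry as the families themselves and the characterization is well posed. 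After this remark the argument is complete.
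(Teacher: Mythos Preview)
Your proposal is correct and follows essentially the same approach as the paper: the paper records the floor-function observation $\lfloor r/2\rfloor = 0 \iff r\in\{0,1\}$ and $\lfloor r/2\rfloor = -1 \iff r\in\{-1,-2\}$ and then declares the corollary immediate from Lemma~\ref{v_3}, which is exactly your argument (your extra remarks on the symmetry $r\mapsto -r-1$ are fine but not needed).
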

\

Now we can verify the purely and chirally cosmetic surgeries on these knots.
\begin{cor}{\cite[Theorem 2.]{BGLX20}}
	The purely cosmetic surgery conjecture is true for all nontrivial Kinoshita-Terasaka and Conway knots.
\end{cor}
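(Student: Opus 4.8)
The plan is to combine Lemma~\ref{v_3} (equivalently Corollary~\ref{trivial}) with the finite type obstruction of Ichihara--Wu. Recall from \cite{IW19} that if a knot $K$ admits a pair of purely cosmetic surgeries, then its finite type invariants of orders $2$ and $3$ both vanish, i.e. $v_2(K) = v_3(K) = 0$. Since every Kinoshita--Terasaka knot $KT_{r,n}$ and every Conway knot $C_{r,n}$ has trivial Conway polynomial, the condition $v_2(K) = a_2(K) = 0$ holds automatically and carries no information; all the content must come from the order-$3$ invariant.

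First I would invoke Lemma~\ref{v_3}, which gives $v_3(KT_{r,n}) = v_3(C_{r,n}) = -\tfrac{nk(k+1)}{4}$ with $k = \lfloor r/2\rfloor$. The integer $nk(k+1)$ vanishes precisely when $n = 0$, or $k = 0$ (that is, $r \in \{0,1\}$), or $k = -1$ (that is, $r \in \{-1,-2\}$); by the identification of the trivial members of these families recorded at the start of Section~\ref{sec:...} this is exactly the parameter set for which $KT_{r,n}$, respectively $C_{r,n}$, is the unknot. This is the content of Corollary~\ref{trivial}: within these two families, $v_3(K) = 0$ if and only if $K$ is trivial.

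Putting these together proves the corollary: if $K$ is a nontrivial member of either family and admitted purely cosmetic surgeries, then $v_3(K) = 0$ by \cite{IW19}, contradicting Corollary~\ref{trivial}; hence no such surgeries exist. The only point requiring care -- and it is not really an obstacle, as it has already been dealt with -- is matching the vanishing locus of the formula in Lemma~\ref{v_3} against the precise list of trivial knots in these families, so that the word ``nontrivial'' is used consistently. All the analytic work, namely the crossing-change computation of $w_3$ via Lescop's and Hoste's formulas and the evaluation $a_2(T_{2,2k+1}) = k(k+1)/2$, has already been carried out in the proof of Lemma~\ref{v_3}, so nothing further is needed.
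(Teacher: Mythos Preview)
Your argument is correct and is essentially identical to the paper's: the paper's proof is the one-line observation that the result follows from Corollary~\ref{trivial} together with the Ichihara--Wu theorem that $v_2(K)\neq 0$ or $v_3(K)\neq 0$ rules out purely cosmetic surgeries. Your additional remarks (that $v_2=a_2=0$ is automatic here, and the matching of the vanishing locus of the formula with the list of trivial members) are accurate but already subsumed in the statement of Corollary~\ref{trivial}, so nothing beyond what you wrote is needed.
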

\begin{proof}
	This is a direct consequence of Corollary \ref{trivial} and the following theorem.
\end{proof}
\begin{thm}{\cite[Theorem 3.5.]{IW19}}
	If a knot $ K $ has the finite type invariant $ v_2(K) \neq 0 $ or $ v_3(K) \neq 0 $, then $ S^3_r(K) \not \cong S^3_s(K) $ when $ r \neq s $.
\end{thm}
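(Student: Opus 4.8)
The plan is to prove the contrapositive: if $ K $ admits purely cosmetic surgeries then $ v_2(K) = v_3(K) = 0 $. So suppose $ S^3_r(K) \cong S^3_s(K) $ as oriented manifolds with $ r \neq s $. First I would reduce the slope pair to a manageable form. Comparing first homology forces the numerators to agree, so write $ r = p/q $ in lowest terms with $ p > 0 $; the Heegaard Floer $ d $-invariant obstructions (Ozsv\'ath--Szab\'o, Ni--Wu, Hanselman) then pin the second slope to $ s = -p/q $, together with the congruence $ q^2 \equiv -1 \pmod p $. Thus everything reduces to comparing $ S^3_{p/q}(K) $ with $ S^3_{-p/q}(K) $, and I would record the standard fact that the latter is the orientation-reverse of a surgery on the mirror, $ S^3_{-p/q}(K) = -S^3_{p/q}(K^*) $.

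For the order-$2$ invariant, recall $ v_2(K) = a_2(K) $. I would invoke the Boyer--Lines surgery formula for the Casson--Walker invariant $ \lambda $,
$$ \lambda\big(S^3_{p/q}(K)\big) = \frac{q}{p}\, a_2(K) + \lambda\big(L(p,q)\big), $$
its oddness $ \lambda(-M) = -\lambda(M) $ under orientation reversal, and $ a_2(K^*) = a_2(K) $. Equating $ \lambda(S^3_{p/q}(K)) $ with $ \lambda(S^3_{-p/q}(K)) = -\lambda(S^3_{p/q}(K^*)) $ yields $ \tfrac{q}{p}a_2(K) = -\lambda(L(p,q)) $. Since $ \lambda(L(p,q)) $ is, up to normalization, a Dedekind sum $ s(q,p) $, and $ q^2 \equiv -1 \pmod p $ gives $ q^{-1} \equiv -q $, the identities $ s(q^{-1},p) = s(q,p) $ and $ s(-q,p) = -s(q,p) $ force $ s(q,p) = 0 $. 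Hence $ a_2(K) = 0 $, i.e. $ v_2(K) = 0 $; this recovers the Boyer--Lines obstruction $ \Delta_K''(1) = 0 $.

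For the order-$3$ invariant I would pass to the next finite-type invariant of rational homology spheres, the degree-$2$ part $ \lambda_2 $ of the LMO invariant (Lescop), whose Dehn surgery formula along $ K $ is a polynomial in $ q/p $ and $ 1/p $ with coefficients built from $ a_2(K) $, $ a_4(K) $ and $ w_3(K) $, plus a lens-space term $ \lambda_2(L(p,q)) $. The key structural point is that $ \lambda_2 $ is \emph{even} under orientation reversal, $ \lambda_2(-M) = \lambda_2(M) $, whereas among the knot inputs only $ w_3 $ changes sign under mirroring, $ w_3(K^*) = -w_3(K) $ (the $ a_{2i} $ being mirror-invariant). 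Writing $ \lambda_2(S^3_{p/q}(K)) = \lambda_2(S^3_{-p/q}(K)) = \lambda_2(S^3_{p/q}(K^*)) $, the lens-space contributions and every term not involving $ w_3 $ cancel, leaving a single relation $ c(p,q)\, w_3(K) = 0 $, where $ c(p,q) $ is the coefficient of the $ w_3 $-term. Verifying $ c(p,q) \neq 0 $ for the admissible $ (p,q) $ then gives $ w_3(K) = 0 $, equivalently $ v_3(K) = 0 $ via $ w_3(K) = -2 v_3(K) $. Combining the two steps, a purely cosmetic pair forces $ v_2(K) = v_3(K) = 0 $, which is exactly the asserted obstruction.

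The main obstacle is the order-$3$ step: extracting the explicit surgery formula for $ \lambda_2 $ in terms of $ a_2, a_4, w_3 $, and—crucially—checking that the surviving coefficient $ c(p,q) $ of the $ w_3 $-term is nonzero for all $ (p,q) $ with $ q^2 \equiv -1 \pmod p $. The order-$2$ step is comparatively routine once the Dedekind-sum vanishing is established, but in both steps the bookkeeping of orientation and mirror signs and of the lens-space normalizations must be handled carefully so that the targeted knot invariant genuinely survives in the final equation.
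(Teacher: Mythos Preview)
The paper under review does not prove this theorem; it is quoted from Ichihara--Wu \cite{IW19} and invoked as a black box in the corollary immediately preceding it. There is therefore no in-paper argument to compare your proposal against.

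For what it is worth, your outline matches the strategy of \cite{IW19}. The slope reduction to $\{r,s\}=\{\pm p/q\}$ with $q^2\equiv -1\pmod p$ is already available from Ozsv\'ath--Szab\'o and Ni--Wu alone; Hanselman's later refinement is not needed and in any case postdates the argument. The $v_2$ step via the Casson--Walker surgery formula and the Dedekind-sum vanishing $s(q,p)=0$ is exactly the Boyer--Lines mechanism. The $v_3$ step via the degree-two LMO piece $\lambda_2$ and Lescop's rational surgery formula is the heart of \cite{IW19}; there the $w_3$-coefficient in the surgery formula is written out explicitly and is a manifestly nonzero rational function of $p$ and $q$, so the verification you flag as the main obstacle is in fact immediate once the formula is in hand---no case analysis on the admissible $(p,q)$ is required. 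Your plan is sound, with the real labor being the extraction of Lescop's formula rather than the endgame.
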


\begin{proof}[Proof of Theorem \ref{main_thm}]
	Suppose $ K $ be a nontrivial member of $ KT_{r,n} $ or $ C_{r,n} $. From Corollary \ref{trivial}, we have $ v_3(K) \neq 0 $. By definition,
	$$ O(K) = \left|\frac{7a_2(K)^2 - a_2(K) - 10a_4(K)}{4v_3(K)}\right| = 0. $$
	As a consequence of Theorem \ref{key} (\cite[Theorem 1.10.]{IIS21}), $ K $ has no chirally cosmetic surgeries.
\end{proof}

\end{document}